\newtheorem{theorem}{Theorem}
\newtheorem{lemma}{Lemma}
\newtheorem{proposition}{Proposition}
\newtheorem{corollary}{Corollary}
\newtheorem{problem}{Problem}
\newcommand{\ie}{{i.e.}}
\newcommand{\eg}{{e.g.}}
\newcommand{\conv}{{\rm conv}}
\newcommand{\NN}{\mathbb{N}} 
\newcommand{\RR}{\mathbb{R}} 
\def\T{\mathcal T}
\newcommand{\later}[1]{}
\newcommand{\old}[1]{}
\title{{\sc Peeling Sequences}}
\author{
Adrian Dumitrescu\thanks{%
Algoresearch L.L.C., Milwaukee, WI, USA. 
Email~\texttt{ad.dumitrescu@algoresearch.org}.}
\and
G\'eza  T\'oth\thanks{Alfred R\'enyi Institute of Mathematics,
Budapest, Hungary\@. Email: \texttt{geza@renyi.hu}.
Supported by National Research, Development and Innovation Office, NKFIH, K-131529 and  ERC Advanced Grant ``GeoScape,'' No. 882971.}
}
\begin{document}

\maketitle

\begin{abstract}
  Given a set of $n$ labeled points in general position in the plane,
  we remove all of its points one by one. At each step, one point from
  the convex hull of the remaining set is erased. 
  In how many ways can the process be carried out?
  The answer obviously depends on the point set. 
  If the points are in convex position, there are exactly $n!$ ways, which is the maximum number of ways
  for $n$ points. But what is the minimum number?
  It is shown that this number is (roughly) at least $3^n$ and at most $12.29^n$. 

\medskip
\textbf{\small Keywords}: integer sequence, convexity, recursive construction.

\end{abstract}

\section{Introduction} \label{sec:intro}

A set of points in the plane is said to be in \emph{general position} if no three of them are collinear.
Let $P$ be a set of $n$ points in the plane in general position. 
Consider the following iterative process:
remove points one by one until no point remains,
under the provision that exactly one extreme point, that is,
a vertex of the convex hull is removed in each step.
If the points are in convex position, there are exactly $n!$ ways to do this,
which is clearly the maximum number of ways
for removing $n$ points. What is the minimum number? 

  A \emph{peeling sequence} for $P$ is any permutation of the points of $P$ that can be obtained by
  writing the labels of the points removed one by one. We are interested in the \emph{minimum}
  number of such permutations that can be obtained, over all point sets of size $n$. 

\paragraph{Definitions.}
Given a point set $P$ in general position the plane,
let $g(P)$ count the number of peeling sequences for $P$; and
$g(n)$ denote the minimum of $g(P)$ over all $n$-element point sets $P$ in general position.
It is easy to see that $g(n)$ is an increasing integer sequence; it is now
entry \texttt{A358251} in~\cite{Sl22}.
Observe that when $n \geq 3$, the last three points can be removed in any order; there are $3!=6$ ways,
whence $g(n)$ is a multiple of $6$ for every $n \geq 3$. 

In an earlier writing~\cite{Du22} the following bounds were obtained (all logarithms are in base $2$):
(i)~every $n$-element point set in general position admits $\Omega(3^n)$ peeling sequences;
(ii)~on the other hand, there are sets with $2^{O(n \log \log {n})}$ peeling sequences. 
Recently, a further improved upper bound, $2^{O(n \log \log \log {n})}$, was presented
by the first named  author~\cite{Du23}.
Here we significantly improve the upper bound; in particular, we show that it is of the
form $O(a^n)$ for some $a>1$.

The problem can be naturally generalized to point sets in higher dimensions.
A set of points in the $d$-dimensional space $\RR^d$ is said to be:
(i)~in \emph{general position} if any at most $d+1$ points are \emph{affinely independent}; and
(ii)~in \emph{convex position} if none of the points lies in the convex hull of the other points.
Let $d\ge 2$ and let $P$ be a point set in $\RR^d$ in general position.
We denote by $g(P)$  the number of peeling sequences of $P$ and let $g_d(n)$
be the minimum of $g(P)$ over all $n$-element point sets $P$ in general position in  $\RR^d$.
In particular, $g_2(n)$ is the same as $g(n)$.
For any fixed $d$ we obtain exponential upper and lower bounds for  $g_d(n)$.

\paragraph{Our results.}
We first observe that  every $n$-element point set has $\Omega(3^n)$ peeling sequences.
From the other direction, we show that if $n$ is a power of $3$, we can find suitable configurations
with a small number of peeling sequences.

\begin{theorem} \label{thm:simple}
Let $n=3^k$, for some $k \in \NN$. 
  There is a set of $n$  points in general position in the plane with at most $27^n$ peeling sequences.
\end{theorem}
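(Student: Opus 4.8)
The plan is to build a self-similar point set recursively. Starting with $n = 3^k$, I would take a base configuration for $n=3$ (three points in general position) and then replace each of its three points with a tiny scaled copy of a configuration on $3^{k-1}$ points, placed so that each cluster is contained in a minuscule disk and the three clusters sit in ``convex-like'' positions relative to one another. The intuition is that if the clusters are small enough and well-separated, then during the peeling process the global structure forces the three clusters to be consumed in a controlled way: at any moment the convex hull vertices come only from the clusters that are still ``active,'' and one cluster must be essentially finished (or at least constrained) before another can be touched. This product structure should give a recurrence of the form $g(n) \le C \cdot g(n/3)^3$ for some constant, which unrolls to a bound of the shape $a^n$.

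The key steps, in order, are as follows. First I would fix a concrete base gadget on three points and understand exactly how the peeling sequences of a clustered configuration decompose into (a)~the interleaving pattern of which cluster each removed point belongs to, times (b)~the internal peeling sequences of each of the three clusters. Second, I would bound the number of admissible interleaving patterns: with three clusters of $n/3$ points each, the number of ways to interleave three sequences of equal length is the multinomial $\binom{n}{n/3,\,n/3,\,n/3}$, which is at most $3^n$ by the standard entropy bound. Third, using the self-similar placement so that a point of one cluster is extreme for the whole set only when that cluster is suitably exposed, I would argue that each peeling sequence of the big set factors as a choice of interleaving together with independent (valid) peeling sequences of the three sub-clusters, giving $g(3^k) \le \binom{3^k}{3^{k-1},3^{k-1},3^{k-1}} \cdot g(3^{k-1})^3$. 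Fourth, I would solve this recurrence: taking logarithms, the multinomial contributes at most $n = 3^k$ bits per level times a geometric factor, and unrolling yields a bound comfortably below $27^n$ (indeed $27^n = 2^{n\log 27} \approx 2^{4.75 n}$ leaves ample slack, which is why the clean constant $27$ appears rather than something tighter).

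The main obstacle, and the place where the argument needs genuine geometric care rather than bookkeeping, is justifying that the peeling process respects the cluster decomposition --- that is, that a legal peeling sequence of the whole set really does restrict to a legal peeling sequence on each cluster, and conversely that the only constraints coupling the clusters are captured by the interleaving count. The worry is that removing points from one cluster can change which points of another cluster are extreme, so the three internal peelings are not obviously independent. I would handle this by choosing the geometry so that each cluster lives in a disk of radius $\eps$ with the three disks placed at the vertices of a triangle of side $1$, and then proving that a point $p$ in a cluster $c$ is a vertex of the convex hull of the current set if and only if $p$ is a vertex of the convex hull of the currently-remaining points \emph{within its own cluster $c$} (for $\eps$ small enough, the far-away clusters cannot ``shadow'' an internal hull vertex, nor expose an internal interior point). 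Establishing this separation lemma is the crux; once it holds, the independence of the three internal peelings and the multinomial interleaving count follow immediately, and the recurrence and its solution are routine.
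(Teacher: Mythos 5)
Your proposal has a fatal quantitative gap in step four: the recurrence you aim for,
$g(3^k)\le \binom{3^k}{3^{k-1},\,3^{k-1},\,3^{k-1}}\, g(3^{k-1})^3$, does \emph{not} unroll to an exponential bound. Writing $f(n)=\log_3 g(n)$ and using $\binom{n}{n/3,n/3,n/3}\le 3^n$, the recurrence reads $f(n)\le n+3f(n/3)$ with $f(1)=0$, which solves to $f(n)=n\log_3 n$: level $j$ of the recursion has $3^j$ subproblems of size $n/3^j$, each paying an interleaving cost of roughly $3^{n/3^j}$, so \emph{every} level contributes $\Theta(n)$ to the exponent --- the per-level cost is not geometrically decreasing, contrary to your ``times a geometric factor'' claim. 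The result is $g(n)\le n^{\Theta(n)}$, essentially the trivial bound $n!$, nowhere near $27^n$, and the ample slack in $27^n\approx 2^{4.75n}$ does not help against $n\log n$ growth in the exponent. The paper's construction is engineered precisely to break the cube in this recurrence down to a square: the three blocks are placed along three rays at $120^\circ$ from a common origin and the whole set is then flattened toward a line, and the resulting invariant is that while all three blocks are nonempty the convex hull has \emph{exactly three} vertices, one per block (the farthest remaining point of each block from the origin). Hence the interleaving pattern alone determines the actual peeling until the first block is exhausted, and internal freedom is paid only for the two surviving blocks: $g(S_k)\le 3^n\, g(S_{k-1})^2$, which with $g(S_{k-1})\le 27^{n/3}=3^n$ closes the induction at exactly $27^n$. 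Full independence of three internal peelings, even if you could establish it, is simply too expensive.

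There is also a geometric error in your crux lemma. With three $\eps$-disks at the vertices of a unit triangle, it is false that a point is extreme in the whole set if and only if it is extreme within its own cluster: a hull vertex of cluster $A$ all of whose witnessing halfplanes open toward $B$ and $C$ is shadowed by those clusters, and this is a phenomenon of \emph{directions}, not distances, so shrinking $\eps$ does not repair it --- while the other clusters survive, only the roughly outward-facing portion of each cluster's hull is globally extreme. So your clusters are neither fully exposed nor independent; notice that the paper exploits exactly this kind of non-independence (its flat geometry forces a \emph{unique} extreme point per active block) rather than trying to avoid it, and that is what makes the counting close.
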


\begin{corollary} \label{cor:simple}
  For every $n\in \NN$,  there is a set of $n$ points in
  general position in the plane with at most $19683^n$ peeling sequences.
\end{corollary}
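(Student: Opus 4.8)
The plan is to deduce the corollary from Theorem~\ref{thm:simple} by padding an arbitrary $n$ up to the nearest power of $3$ and then invoking the monotonicity of the sequence $g(n)$. Since Theorem~\ref{thm:simple} only controls the number of peeling sequences when $n$ is a power of $3$, the entire task is to transfer that bound to intermediate values of $n$ while losing only a bounded factor in the exponent.

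First I would fix $n \in \NN$ and let $m = 3^{\lceil \log_3 n\rceil}$ be the smallest power of $3$ with $m \ge n$. A one-line calculation gives $n \le m < 3n$: if $3^{k-1} < n \le 3^{k}$ then $m = 3^{k} = 3\cdot 3^{k-1} < 3n$, while $m = n$ when $n$ is itself a power of $3$. Applying Theorem~\ref{thm:simple} to $m$ then yields an $m$-element set in general position with at most $27^{m}$ peeling sequences, i.e. $g(m) \le 27^{m}$.

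Next I would use that $g$ is an increasing sequence, as noted in the introduction, to conclude $g(n) \le g(m)$. For completeness I would recall the reason for monotonicity: given an optimal $j$-point set $Q$ (so $g(Q) = g(j)$) and a vertex $p$ of its convex hull, the peeling sequences of $Q$ that begin by removing $p$ are exactly the peeling sequences of $Q \setminus \{p\}$, so $g(j-1) \le g(Q\setminus\{p\}) \le g(Q) = g(j)$; chaining these inequalities from $j=m$ down to $j=n+1$ gives $g(n) \le g(m)$.

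Combining the two bounds with $m < 3n$ gives
\[
  g(n) \;\le\; g(m) \;\le\; 27^{m} \;<\; 27^{3n} \;=\; (27^{3})^{n} \;=\; 19683^{n},
\]
so the $n$-element set realizing $g(n)$ is the desired configuration. The only real obstacle is the blow-up in the exponent: rounding $n$ up to a power of $3$ can triple $m$, which is precisely why the clean base $27$ of Theorem~\ref{thm:simple} degrades to $27^{3} = 19683$. No new geometric construction is needed beyond the monotonicity observation, and I expect the whole argument to be short.
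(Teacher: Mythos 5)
Your proposal is correct and matches the paper's proof in essence: both round $n$ up to the next power of $3$ (noting $m<3n$), apply Theorem~\ref{thm:simple}, and invoke monotonicity to conclude $g(n)\le 27^{3n}=19683^n$. The only cosmetic difference is that the paper uses monotonicity at the level of point sets (taking an $n$-point subset $P_n\subseteq S_{k+1}$ and bounding $g(P_n)\le g(S_{k+1})$ via the induced-subsequence observation), whereas you use monotonicity of the sequence $g(n)$ itself, with a valid self-contained justification via peeling an extreme vertex first.
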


We next arrive at our main result which summarizes our best lower and upper bound, respectively.

\begin{theorem} \label{thm:main}
 Every $n$-element point set in general position in the plane has $\Omega(3^n)$ peeling sequences.
 On the other hand, for every $n\ge 3$ there is a  point set in general position with at most
 $12.29^{n}/100$  peeling sequences.
\end{theorem}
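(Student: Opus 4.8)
The two halves of the statement are of completely different character, so I would treat them separately and with very different tools.

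\smallskip
\noindent\textbf{The lower bound $g(P)=\Omega(3^n)$.} I would argue directly on the tree of prefixes of peeling sequences. Root this \emph{peeling tree} at $P$; a node is the set of points that still remain, and its children are obtained by deleting one vertex of the current convex hull. Every root-to-leaf path corresponds to exactly one peeling sequence and conversely, so $g(P)$ equals the number of leaves. The one fact needed is that any set of $s\ge 3$ points in general position has a convex hull with at least three vertices; hence every node whose remaining set has size $s\ge 3$ has at least three children, a node of size $2$ has two, and a node of size $1$ has one. Multiplying the guaranteed branching factors level by level (sizes $n,n-1,\dots,3$ each contribute a factor $\ge 3$, size $2$ a factor $2$) yields
\[
g(P)\ \ge\ 2\cdot 3^{\,n-2}\ =\ \tfrac{2}{9}\,3^{\,n}\ =\ \Omega(3^{\,n}).
\]
This part is routine and requires no special configuration.

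\smallskip
\noindent\textbf{The upper bound: hierarchical construction.} Here the task is to exhibit, for every $n\ge 3$, a configuration with at most $12.29^{\,n}/100$ peeling sequences, improving the base $27$ of Theorem~\ref{thm:simple}. I would build the set \emph{hierarchically}: fix a small, carefully chosen \emph{template} $B$ on $b$ points, take a scaled master copy of $B$, and recursively replace each of its points by a shrunken cluster that is itself the construction on $n/b$ points. Choosing each cluster tiny relative to the gaps between clusters keeps the whole set in general position across all scales and guarantees that a cluster point lies on the global hull only when it lies on the outward-facing part of its own cluster's hull. This localizes peeling, so every global peeling sequence restricts to a legal peeling of each cluster, giving a recurrence
\[
g(\text{whole})\ \le\ W\cdot \prod_{i} g(\text{cluster}_i),
\]
where $W$ bounds the number of ways the cluster-peelings can be \emph{interleaved} into one global order.

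\smallskip
\noindent\textbf{Controlling the interleaving and optimizing the base.} The crux is $W$. A wholly free interleaving of the clusters costs a multinomial factor that is exponential in $n$ and would blow the rate up to something super-exponential; so the geometry of $B$ must be designed to force the clusters to be peeled in a nearly sequential order, keeping $\ln W$ genuinely sublinear in $n$. Under such control the rate $\rho=\lim \tfrac{\ln g(n)}{n}$ is governed by a balance between the template's per-point efficiency $\tfrac{\ln g(B)}{b}$ and a convergent correction coming from the interleaving factors summed over all levels of the hierarchy. I would then treat the template (its size $b$, its shape, and how many of its points can be simultaneously exposed during a peeling) as free parameters and minimize the resulting base; this optimization is exactly what replaces the crude $27$ by $12.29$. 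Non-powers of $b$ are handled by padding or combining templates of slightly different sizes, just as in Corollary~\ref{cor:simple}, and the harmless constant $1/100$ absorbs the base-case offset so that the inequality already holds at $n=3$, where $g=6\le 12.29^{3}/100$.

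\smallskip
\noindent\textbf{Main obstacle.} The delicate step is the geometric design that simultaneously (i) preserves general position at every scale, (ii) forces the hierarchy to be respected so that $W$ stays subexponential \emph{for every} peeling order, not merely for a convenient one, and (iii) is efficient enough to push the base down to $12.29$. Proving that the forced-order property is adversary-proof — no clever interleaving of the fronts of different clusters can escape it — is the heart of the argument, and the final numerical optimization of the template parameters must be carried out with enough slack to survive the rounding hidden in the constant $1/100$.
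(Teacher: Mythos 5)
Your lower-bound argument is correct and is essentially the paper's: at every step with $s\ge 3$ points remaining the hull has at least three vertices, giving the branching recurrence and $g(P)\ge 2\cdot 3^{n-2}=\Omega(3^n)$. No issues there.

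The upper bound, however, has a genuine gap at its declared crux. Your plan is to design the template so that the clusters are peeled ``nearly sequentially,'' keeping $\ln W$ sublinear in $n$. This is neither achievable nor what the actual proof needs. Since the hull of the remaining set always has at least three vertices, and in any construction with small, well-separated clusters these vertices generically lie in distinct clusters, the adversary retains a choice of \emph{which cluster to hit} at essentially every step; in the paper's own construction the number of simplified (cluster-label) sequences is genuinely exponential, up to $3^n$. The mechanism that saves the day is different: the three blocks are placed on rays at $120^\circ$ and the whole set is flattened so that, \emph{while all three blocks are alive, the hull has exactly three vertices, one per block}. Consequently the block-label sequence uniquely determines which point is removed during that entire phase --- the expansion of the simplified sequence costs nothing until the first block dies, at some position $s$. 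At that moment the dead block's internal order has been fixed for free, and the partially peeled survivors are unions of at most two contiguous sub-blocks. Effectively the recursion becomes $g(n)\le 2^{cn}\,g(n/3)^2$ (plus lower-order corrections) rather than $2^{cn}\,g(n/3)^3$: since two-thirds of the mass recurses, the per-level linear exponents form a convergent geometric series, which is exactly what yields a bound of the form $O(a^n)$. Under your scheme, an exponential $W$ multiplied by the full product $\prod_i g(\mathrm{cluster}_i)$ unrolls to $2^{\Theta(n\log n)}$, i.e., no better than the trivial bound; and even granting a subexponential $W$ per level, you would need the per-level contributions $\ln W$ to be \emph{summable} across the $\Theta(\log n)$ levels of the hierarchy, which you nowhere establish. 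Finally, the constant $12.29$ does not come from optimizing a template as you suggest, but from a four-case analysis over the death position $s$ (with entropy and binomial-coefficient estimates, and an asymmetric placement of the blocks along the rays), none of which your sketch supplies. So the proposal has the right scaffolding --- hierarchical clusters, simplified sequences, the product lemma $g(Z)\le\binom{n}{n_1}g(X)g(Y)$ --- but the central idea (the forced, cost-free expansion while all blocks are active, and the resulting drop from three recursive factors to two) is missing, and the route you propose in its place would not close the argument.
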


The exponential lower bound from the planar case as well as the construction in the proof of
Theorem \ref{thm:simple} can be generalized to higher dimensions.

\begin{theorem} \label{thm:d-dim}
 Every $n$-element point set in general position in $\RR^d$ has $\Omega((d+1)^n)$ peeling sequences.
 On the other hand, if $n=(d+1)^k$, where $k \in \NN$,  there is a set of $n$ points in general position
 in $\RR^d$ with at most $(d+1)^{(d+1)n}$ peeling sequences.
\end{theorem}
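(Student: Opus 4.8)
The two halves call for different tools, and I would treat them separately. For the lower bound I would induct on $m=|P|$, using only the fact that every $d$-dimensional convex polytope has at least $d+1$ vertices. Since $P$ is in general position, any $d+1$ of its points are affinely independent, so once $m\ge d+1$ the hull $\conv(P)$ is $d$-dimensional and hence has at least $d+1$ vertices. Splitting the peeling sequences according to which point is removed first gives the identity $g(P)=\sum_{v}g(P\setminus\{v\})$, the sum ranging over the hull vertices $v$ of $P$, whence $g(P)\ge (d+1)\,\min_{v}g(P\setminus\{v\})$ for $m\ge d+1$. In the base case $m=d+1$ the points form a simplex, every remaining subset is again a simplex, so all orders are legal and $g(P)=(d+1)!$. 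With $\gamma_d=(d+1)!/(d+1)^{d+1}$ the induction is immediate: the base case holds with equality, and the step multiplies the hypothesis $\gamma_d(d+1)^{m-1}$ by $d+1$. Hence $g(P)\ge\gamma_d\,(d+1)^{m}=\Omega((d+1)^{n})$.

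For the upper bound I would transcribe the recursive construction behind Theorem~\ref{thm:simple} (the planar case $q:=d+1=3$) to $\RR^{d}$, replacing the triangle by a $d$-simplex and the $3$-fold recursion by a $(d+1)$-fold one. Concretely, I would build $P_k$ on $n=q^{k}$ points from $q$ scaled-down copies of $P_{k-1}$, placed near the vertices of a $d$-simplex with the scaling large enough that the whole set is in general position and that the peeling decomposes: up to a number of global interleaving patterns bounded in terms of $d$ alone, removing the points of $P_k$ amounts to peeling each of the $q$ copies separately. Carrying the planar estimate through with $3$ replaced by $q$ then yields
\[
 g(P_k)\ \le\ c_d\cdot g(P_{k-1})^{\,q},
\]
with base case $g(P_1)=q!$ and $c_d$ a constant depending only on $d$. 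Writing $L(k)=\log_q g(P_k)$, this reads $L(k)\le\log_q c_d+q\,L(k-1)$ with $L(1)=\log_q(q!)\le q$; as long as $\log_q c_d\le q(q-1)$ (which tracking the constant through the planar argument supplies), unrolling gives
\[
 L(k)\ \le\ q(q-1)\,\frac{q^{k-1}-1}{q-1}+q^{k}\ =\ 2q^{k}-q\ \le\ q^{k+1}
\]
for every $q\ge 2$. Since $n=q^{k}$ this is $L(k)\le qn$, i.e.\ $g(P_k)\le q^{qn}=(d+1)^{(d+1)n}$.

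The genuinely new work, and the step I expect to be the main obstacle, is entirely geometric and lies in the upper bound. The planar argument rests on placing the $q=3$ scaled copies so that only a bounded number of global removal patterns can occur --- informally, so that the outward normal cones at the ``corners'' govern, in a recursion-friendly way, when a point of a given copy can appear on the global hull. The crux for $\RR^d$ is to reproduce this control for the normal cones at the $d+1$ vertices of a simplex, keeping the number of global patterns (hence $c_d$) bounded by a function of $d$ alone while preserving general position. Verifying this decomposition, and pinning down the exact constant $c_d$, is where the real difficulty sits; the two displayed recursions above are then routine.
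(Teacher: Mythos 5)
Your lower bound is correct and is essentially the paper's argument: general position makes $\conv(P)$ a $d$-polytope with at least $d+1$ vertices once $|P|\ge d+1$, the recurrence $g(P)\ge (d+1)\min_v g(P\setminus\{v\})$ follows, and your explicit base case $g=(d+1)!$ at a simplex with constant $\gamma_d=(d+1)!/(d+1)^{d+1}$ is a harmless refinement of the paper's one-line recurrence.

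The upper bound, however, has a genuine gap, and it sits exactly where you flag ``the crux'': the claimed recursion $g(P_k)\le c_d\, g(P_{k-1})^{\,q}$ with $q=d+1$ and $c_d$ depending on $d$ alone is not only unproven but false for the placement you describe, and it also misreads the planar argument you are trying to transcribe. If the $q$ copies sit near the vertices of a simplex (or along rays from the origin, as in the paper), then as long as every block is nonempty each block contributes at least one vertex to the global hull (its extreme point in the direction of its ray), so at each of the first $n/q$ peeling steps all $q$ symbols are available; hence the number of global interleaving patterns is at least $q^{n/q}$, exponential in $n$, never bounded by a constant $c_d$. The paper's planar proof (Theorem~\ref{thm:simple}) does \emph{not} bound the patterns by a constant: it allows all $\le 3^n$ simplified sequences and gains from the opposite direction. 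After flattening, the invariant is that while all blocks are active the hull has \emph{exactly} $d+1$ vertices, one per block, so the simplified sequence \emph{determines} which point is peeled at each such step; in particular the peeling order of the block exhausted first is completely determined and contributes no factor at all. This drops the exponent from $q$ to $q-1=d$ and yields $g(S_k)\le (d+1)^n\, g(S_{k-1})^{d}$, which unrolls to $(d+1)^{(d+1)n}$ exactly as stated. Note that your exponent-$(d+1)$ recursion cannot be repaired by substituting the true pattern count $q^n$ for $c_d$: then $L(k)\le q^k+q\,L(k-1)$ unrolls to $L(k)=\Theta(k\,q^k)$, i.e.\ $g=2^{\Theta(n\log n)}$, no better than the trivial $n!$ bound. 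So the missing idea is the flattening invariant plus the determinism of the first-finished block, not control of the number of interleavings.
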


\begin{corollary} \label{cor:d-dim}
  For every $n\in \NN$,  there is a set of $n$ points
  in general position in $\RR^d$ with at most
  $(d+1)^{(d+1)^2n}$ peeling sequences.
\end{corollary}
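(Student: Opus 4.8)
The plan is to deduce Corollary \ref{cor:d-dim} from the upper bound in Theorem \ref{thm:d-dim} by a rounding-and-monotonicity argument, exactly parallel to how Corollary \ref{cor:simple} follows from Theorem \ref{thm:simple}. The construction in Theorem \ref{thm:d-dim} produces good configurations only when the cardinality is an exact power of $d+1$; for a general $n$ the idea is to round $n$ up to the nearest such power and then invoke monotonicity of $g_d$ to transfer the bound back down to $n$.

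First I would record the monotonicity of $g_d$, namely that $g_d(n) \le g_d(n+1)$, and hence $g_d(n) \le g_d(m)$ whenever $n \le m$. This rests on the basic recursion for peeling sequences: for any point set $P$ in general position, the first point erased must be a vertex of $\conv(P)$, and conditioning on this first vertex $v$ gives
\[
g_d(P) \;=\; \sum_{v \in \text{vert}(\conv P)} g_d(P\setminus\{v\}).
\]
In particular $g_d(P\setminus\{v\}) \le g_d(P)$ for every hull vertex $v$. Applying this to an optimal $(n+1)$-point set $P$ and deleting one hull vertex yields an $n$-point set with at most $g_d(P)=g_d(n+1)$ peeling sequences, so $g_d(n)\le g_d(n+1)$.

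Next, given an arbitrary $n$, let $k$ be the least nonnegative integer with $m:=(d+1)^k \ge n$. Then $(d+1)^{k-1} < n$, whence $m = (d+1)\cdot(d+1)^{k-1} < (d+1)\,n$, so in particular $m \le (d+1)n$. Since $m$ is a power of $d+1$, the upper bound in Theorem \ref{thm:d-dim} supplies a configuration of $m$ points with $g_d(m) \le (d+1)^{(d+1)m}$ peeling sequences. Combining monotonicity with the estimate $m \le (d+1)n$ then gives
\[
g_d(n) \;\le\; g_d(m) \;\le\; (d+1)^{(d+1)m} \;\le\; (d+1)^{(d+1)\cdot(d+1)n} \;=\; (d+1)^{(d+1)^2 n},
\]
which is the claimed bound.

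There is no genuine obstacle here; this is a routine consequence of the preceding theorem. The only points requiring care are the clean justification of the recursion (and hence of monotonicity, whose planar analogue the paper has already asserted for $g(n)$) and the elementary rounding estimate $m < (d+1)n$.
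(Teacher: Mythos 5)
Your proposal is correct and follows essentially the same route as the paper: round $n$ up to the next power $m=(d+1)^k \le (d+1)n$, apply the upper bound of Theorem~\ref{thm:d-dim} at $m$, and transfer the bound down to $n$ by monotonicity, exactly as the paper does by taking an $n$-point subset of $S_{k+1}$. The only (harmless) difference is how monotonicity is justified: the paper invokes subset-monotonicity of $g(\cdot)$ via restriction of peeling sequences, while you derive $g_d(n)\le g_d(n+1)$ from the recursion $g(P)=\sum_{v} g(P\setminus\{v\})$ over hull vertices, which is if anything a more self-contained justification of the same step.
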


In this case we did not attempt to optimize the base of the exponential function.

\paragraph{Related work.}
The concept of ``peeling'' a convex hull has been associated to dynamic convex hull algorithms and
convex hull determination~\cite{OL81}.
For a planar point set $P$, the \emph{convex layers} of $P$ are the convex polygons obtained by iterating the
following procedure: compute its convex hull and remove its vertices from $P$~\cite{ANW21,Ch85}. 
The process of peeling a point set has been shown very useful in obtaining robust estimators
in statistics~\cite{Ch85,Sh76}.
A common estimator of a (unidimensional) sample is its arithmetic mean, however this estimator
can be severely affected by outliers.  A method that performs better is is to discard
the highest and lowest $\alpha$-fraction
of the data and take the mean of the remainder. This is the $\alpha$-\emph{trimmed mean}. 
The median is the special case $\alpha=1/2$. The higher dimensional analog of trimming, called ``peeling''
by Tukey, consists of successively removing extreme points of the convex hull of the data until a certain
fixed fraction of the points remains.

A quadratic algorithm for peeling (\ie, for convex layer decomposition) was initially proposed by Shamos~\cite{Sh78}.
A faster algorithm, running in $O(n \log^2 {n})$ is due to Overmars and Van Leeuwen~\cite{OL81}.
Finally, an optimal algorithm, running in $O(n \log {n})$ time was obtained by Chazelle~\cite{Ch85}. 
Computing the convex layers and studying their structure in a random setting have been studied by
Dalal~\cite{Da04}. Har-Peled and Lidick{\'{y}}~\cite{PL13} have studied the number of steps
needed for peeling the integer grid $G_n$ with $n$ points (with, say, $n=k^2$); note that $G_n$ is
\emph{not} in general position, however the peeling process can be executed on any point set. 

It should be noted that while in the discussion above all the extreme points in a layer are removed in
parallel (\ie, at the same time), in our study --- of the function $g(n)$ --- the extreme points are removed
sequentially one by one. 

After some preliminaries in Section~\ref{sec:prelim}, we prove our main results,
Theorems~\ref{thm:simple} and~\ref{thm:main},  in Section~\ref{sec:main}.
Theorem~\ref{thm:d-dim} regarding higher dimensions can be found in Section~\ref{sec:d-dim}.
We conclude with some remarks in Section~\ref{sec:remarks}.

\section{Lower bound and small values of $n$} \label{sec:prelim}

As a warm-up we determine the values of $g(\cdot)$ for the first few values of $n$.
Trivially, we have $g(1)=1$, and $g(2)=2$.

\begin{proposition} \label{prop:small} 
  The following exact values can be observed. 
\begin{align*}
g(3) &= 6, \\
g(4) &= 18, \\
g(5) &= 60, \\
g(6) &= 180.
\end{align*}
\end{proposition}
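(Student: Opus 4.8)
The plan is to base everything on the recurrence
\[
g(\emptyset)=1, \qquad g(P)=\sum_{v} g(P\setminus\{v\}),
\]
where $v$ ranges over the vertices of $\conv(P)$; this holds because the first point erased must be an extreme point, after which the process continues on $P\setminus\{v\}$. Since three points in general position form a triangle, $g=6$ for any $3$-element set, and a $4$-element set is either in convex position (giving $4!=24$) or a triangle with one interior point (giving $3\cdot 6=18$); these are the only two order types, so $g(3)=6$ and $g(4)=18$ at once. For $n=5$ and $n=6$ I would run the recurrence one more level, organizing the argument by the number $h$ of hull vertices.

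For $n=5$ I would split into $h\in\{3,4,5\}$. Convex position ($h=5$) gives $5!=120$. For $h=4$, a convex quadrilateral $abcd$ with one interior point $e$, I would observe that the diagonals $ac$ and $bd$ each split the quadrilateral into two triangles, that $e$ lies in exactly one triangle of each split, and that removing a vertex leaves $e$ interior precisely when the opposite triangle is the one containing $e$; this forces exactly two of the four deletions to be of triangle-plus-interior type ($18$) and two of convex type ($24$), so $g=2\cdot 18+2\cdot 24=84$ regardless of the position of $e$. The delicate case is $h=3$: a triangle $abc$ with two interior points $d,e$, where $g(P)=\sum_{v\in\{a,b,c\}}g(P\setminus\{v\})$ and each summand is a $4$-point value, hence $18$ or $24$. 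The only way to drop below $60$ would be for all three summands to be $18$, giving $54$, so the entire point is to rule this out.

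The key lemma is that the line through $d$ and $e$ crosses two edges of triangle $abc$ and thus separates one vertex, say $a$, from the other two; I claim $P\setminus\{a\}=\{b,c,d,e\}$ is then in convex position, contributing $24$. Indeed, with $b,c$ strictly on one side of line $de$, the segment $de$ is an edge of $\conv\{b,c,d,e\}$, so $d,e$ are extreme; moreover neither $b$ nor $c$ can be interior, since $\conv\{c,d,e\}\subseteq\conv\{a,b,c\}$ forces any interior point of $\triangle cde$ to lie in the interior of $\triangle abc$, which a vertex cannot. Hence at least one summand equals $24$ while the other two are at least $18$, so $g(P)\ge 18+18+24=60$; as $84,120\ge 60$ as well, every $5$-point set satisfies $g\ge 60$, and an explicit triangle-with-two-interior configuration in which exactly one of the three subsets is convex attains equality, giving $g(5)=60$.

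Finally, for $n=6$ the recurrence together with $g(5)=60$ yields $g(P)=\sum_{v}g(P\setminus\{v\})\ge 3\cdot 60=180$, since $\conv(P)$ has at least three vertices and every $5$-point deletion has $g\ge 60$. For the matching upper bound I would exhibit one explicit $6$-point configuration—most cleanly a triangle with three interior points placed so that deleting any hull vertex leaves an optimal ($g=60$) $5$-point set—and confirm $g=180$ through the recurrence; equivalently, as there are only a handful of order types on six points, one may simply evaluate $g$ on each and read off the minimum. I expect the main obstacle to be the $h=3$ step for $n=5$: proving $54$ unattainable via the separation lemma above, together with verifying that the proposed $6$-point construction genuinely produces three optimal $5$-point subconfigurations rather than some larger value (note that no generic construction can achieve $3g(n-1)$, since $g(5)=60\neq 3g(4)$, so the $n=6$ equality really does require a tailored configuration).
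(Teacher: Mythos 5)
Your proposal is correct and follows essentially the same route as the paper: the recurrence $g(P)=\sum_v g(P\setminus\{v\})$ over hull vertices with a case analysis on the hull size $h$, culminating in $60=24+18+18$ for $n=5$ and $180=3\cdot 60$ for $n=6$. The only difference is one of rigor, to your credit: you explicitly prove (via the line through the two interior points separating one triangle vertex) that in the $h=3$, $n=5$ case at least one deletion leaves four points in convex position, ruling out $54$ --- a claim the paper merely asserts --- and you correctly flag that the $n=6$ upper bound needs a tailored configuration whose three deletions each yield a $60$-minimizer, which the paper delegates to its figure.
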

\begin{proof}
Let $h$ denote the size of the convex hull of $P$. 
  The case $n=3$ is clear: there are $3!=6$ permutations and each of them is a valid peeling sequence.
The remaining cases are illustrated in Fig.~\ref{fig:small}. 

\begin{figure}[ht]
\centering
\includegraphics[scale=0.99]{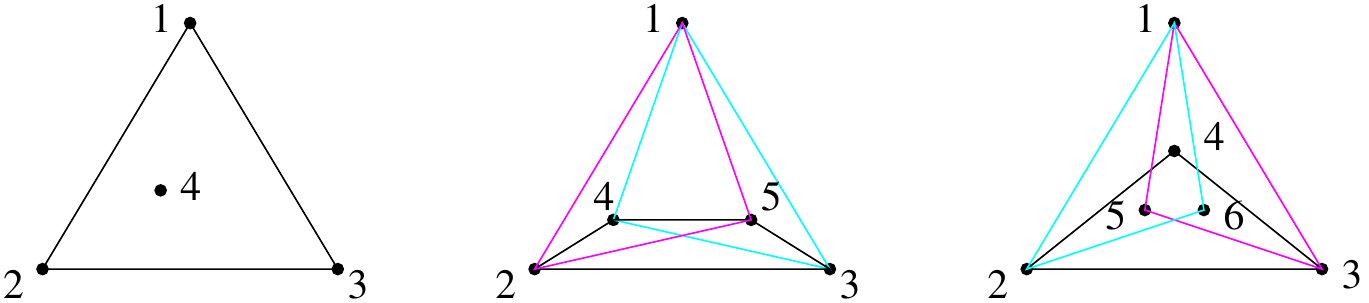}      
\caption{Illustration for $n=4,5,6$.}
\label{fig:small}
\end{figure}

  Let now $n=4$. If the points are in convex position, there are $4!=24$ permutations.
  If the points are not in convex position, let $4$ be the interior point. Then any permutation
  that starts with $4$ is invalid, however, all remaining $4!-3!=18$ permutations are valid.

  Let now $n=5$.  If $h \geq 4$, there are at least $4 \times g(4) =72$ permutations.
  The case $h=3$ yields the smallest number, $24+18+18=60$, of permutations; indeed,
  removing one of the extreme points yields a convex quadrilateral, while the other two
  removals can result in a triangle with a point inside. 

  Finally, let $n=6$.  If $h \geq 4$, there are at least $4 \times g(5) =240$ permutations.
  The case $h=3$ yields the smallest number: $3 \times g(5) = 3 \times 60=180$ permutations; indeed,
  removing each of the extreme points may yield the minimizer for $n=5$ discussed above. 
\end{proof}

\paragraph{Lower bound.}
It is clear that $g(2)=2$. Assume now that $n \geq 3$. 
Let $P$ be any $n$-element point set in general position. By the assumption, $\conv(P)$ has
at least three extreme vertices, removal of each yields a set of $n-1$ points in general position.
Any two peeling sequences resulting from the removal of two different extreme vertices are clearly different.
As such, $g(\cdot)$ satisfies the recurrence $g(n) \geq 3 \cdot g(n-1)$. Consequently, $g(n) = \Omega(3^n)$.
By Proposition~\ref{prop:small} we also have $g(n) \geq 180 \cdot 3^{n-6}$ for every $n \geq 6$.
\qed

\section{Upper bounds}\label{sec:main}

\subsection{First construction and analysis } \label{subsec:first}

Let $Z=X \cup Y$ be  a finite point set in general position, where $X \cap Y =\emptyset$. 
Consider any peeling sequence, say, $\pi$, of $Z$.
Then $\pi$ naturally induces two peeling sequences, one for $X$ and one for $Y$.
This implies, that $g$ is a monotone increasing function.

\paragraph{Proof of Theorem \ref{thm:simple}.}
We construct the sets $S_k$ recursively for $k \in \NN$,  
such that the set $S_k$ contains $n=3^k$ points and $g(S_k)\le 27^n$.
Moreover, $S_k$ is {\em flat}, that is, all points are very close
(compared to the minimum distance in $S_k$) to a line; see Fig.~\ref{fig:construction}.

\begin{figure}[ht]
\centering
\includegraphics[scale=0.6]{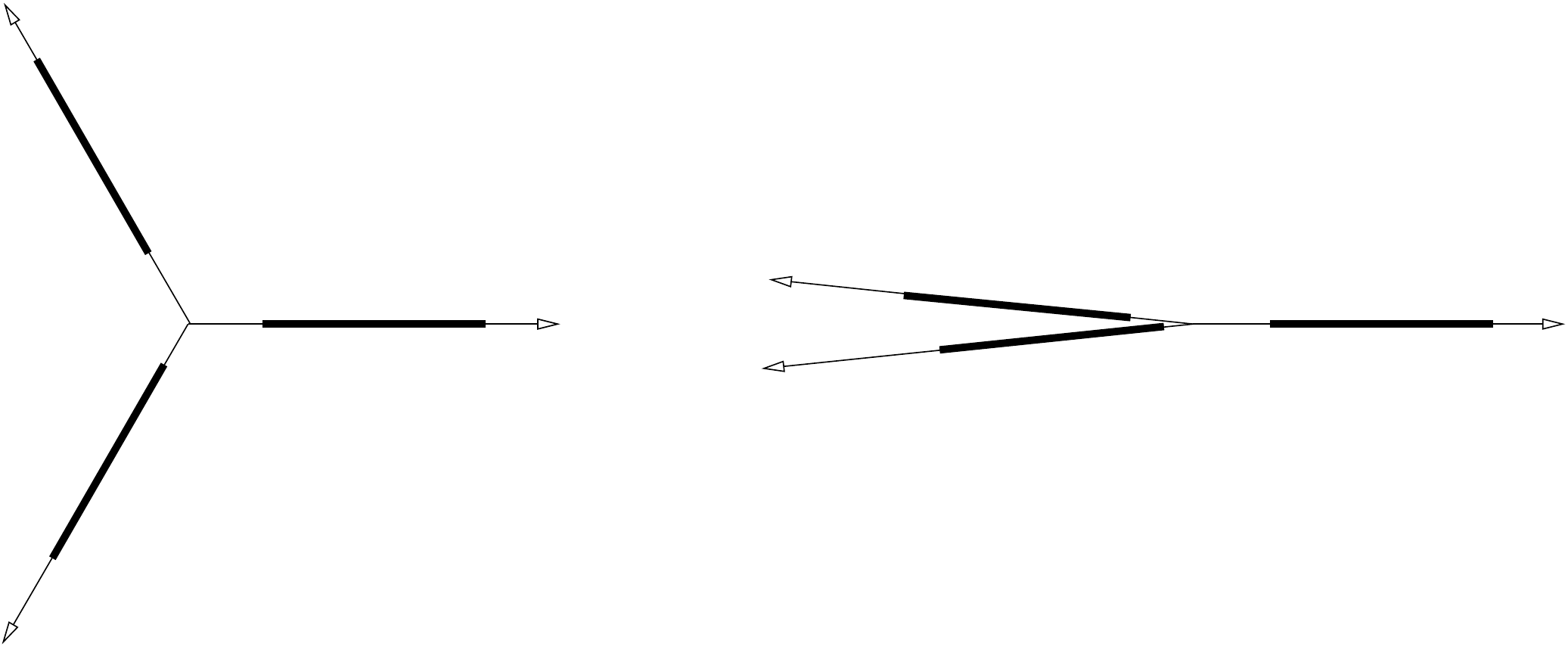}      
\caption{Recursive construction (sketch): before and after a flattening step.}
\label{fig:construction}
\end{figure}

The set $S_0$ is just one point. Suppose that we already have a construction $S_{k-1}$
with $n/3=3^{k-1}$ points. Take three rays with a common origin with angles $120^\circ$ between them.
Place a  copy of  $S_{k-1}$ along each ray.
Let $A$, $B$, $C$ denote the three copies of $S_{k-1}$, called {\em blocks} and let 
$S'_k=A \cup B \cup C$, a set of $n=3^k$ points.
We can assume that no two points of $S'_{k}$ have the same $x$-coordinate, otherwise we apply a rotation.
Let $S=S_{k}$ be a flattened copy of $S'_{k}$, that is, apply the transformation
$(x,y)\longrightarrow (x, \varepsilon y)$ to $S'_{k}$; $n=|S|=3^k$.

Let $\pi$ be any peeling sequence of $S$, and let $\pi'$ be a \emph{prefix} of it.
We say that $S$ \emph{yields} $S'$ via $\pi'$, written $S \stackrel{\pi'}{\Rightarrow} S'$ if
applying $\pi'$ to $S$ yields $S'$. 
Observe that each ray ``supports'' $n/3$ points and the following invariant is maintained:
\begin{enumerate} [(I)] \itemsep 1pt
\item \label{inv:1} Let $\pi'$ be a \emph{prefix} of $\pi$, and assume that
  $S \stackrel{\pi'}{\Rightarrow} S'$, where $S'$ still has three \emph{active} rays,
  \ie, none of $A$, $B$, or $C$, have been completely erased.
  Then $S'$ has exactly $3$ extreme vertices, one from each of $A$, $B$, $C$.
\end{enumerate}

Replace every element of $A$ (resp. $B$, $C$) by the symbol $a$ (resp. $b$, $c$).
This sequence contains $n/3$ $a$'s, $b$'s and $c$'s
and we call it the {\em simplified peeling sequence $\pi^*$}.
It just tells us from which block points are peeled off at each step.

Consider now the subsequence $\pi_A$ of $\pi$ consisting of the elements of $A$.
Observe that $\pi_A$ is a peeling sequence of $A$. Clearly the same holds for $B$ and $C$, or any other subset of $S$.

We can obtain all peeling sequences $\pi$ of $S$ in two steps: we first determine the 
simplified peeling sequence $\pi^*$ and then we expand it to a peeling sequence.
There are less than $3^n$ simplified peeling sequences. Let $\pi^*$ be a simplified peeling sequence.
Consider the last $a$, last $b$, and last $c$ in $\rho$ and take the first of them.
Assume for simplicity that it is the $a$, and it is at position $s$. Clearly, $n/3 \le s \le n$.

Estimate now the number of ways $\pi^*$ can be expanded to a peeling sequence of $S$.
Observe that $A$ is erased before $B$ and $C$, therefore, whenever an element of $A$ was peeled off,
there was exactly one element of $A$, $B$, and $C$ on the convex hull (in particular, there were only three extreme points). 
So the point that was peeled off was determined by $\pi^*$.
That is, if $\pi^*$ is fixed, then there is just one possible peeling order for the points of $A$,
namely the decreasing order of distance from the origin. The same applies to the points of $B$ and $C$
that were peeled off by the time $A$ is erased. 

By the previous observations, for the elements of $B$ (resp. $C$) we have at most
$g(B)=g(C)=g(S_{k-1}) \leq 27^{n/3} =3^n$ choices. Consequently, the induction step yields
\[ g(S_k) \leq 3^n g^2(S_{k-1}) \leq 27^n, \]
as required.
\qed

\paragraph{Proof of Corollary~\ref{cor:simple}.}
For $3^k<n<3^{k+1}$, let $P_n$ be any subset of $S_{k+1}$ of size $n$.
Then by monotonicity, we have $g(n) \le g(P_n) \le g(P_{3^{k+1}}) \le 27^{3n}=19683^n$.
\qed

\paragraph{Remark.} A construction similar to that in the proof of Theorem~\ref{thm:simple}
was used by Edelsbrunner and Welzl~\cite{EW86} to prove the existence of $n$-element point sets
with $\Omega(n \log{n})$ halving lines.

\subsection{Second construction and analysis } \label{subsec:second}

\paragraph{Preliminaries.} For $0 \le p \le 1$, denote by 
\[ H(p)=-p \log p- (1-p) \log (1-p), \] 
the \emph{binary entropy function}, where $\log$ stands for the logarithm in base 2.
(By convention, $0\log{0}=0$.)
We will use the following estimate in our calculation; see, \eg,~\cite[Lem.~3.6]{Gr11}, or~\cite[Cor.~10.3]{MU17}.
For any integer $n \ge 1$ and $0 \leq \alpha \leq 1/2$, we have
\begin{equation} \label{eq:entropy}
\binom{n}{\alpha n} \le 2^{n H(\alpha)}.
\end{equation}
Also, note that for any integer $n \geq 1$ and real $x$, where $n/3 \leq x \leq n$, we have
\begin{equation} \label{eq:fandc}
\binom{n}{\lceil x \rceil} \leq 2 \binom{n}{\lfloor x \rfloor}.
\end{equation}

\begin{lemma}\label{lem:coef}
  Let $n \geq 24$ be a positive integer. Then
  \begin{equation} \label{eq:coef}
    {n \choose k} \leq 2^n/6 \text{ for every } 0 \leq k \leq n.
  \end{equation}
\end{lemma}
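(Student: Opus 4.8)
The key structural fact is that a fixed row of Pascal's triangle is unimodal: $\binom{n}{0} < \binom{n}{1} < \cdots < \binom{n}{\lfloor n/2\rfloor}$ and then the coefficients decrease symmetrically. Hence $\max_{0\le k\le n}\binom{n}{k} = \binom{n}{\lfloor n/2\rfloor}$, the central binomial coefficient, and it suffices to prove the single inequality $\binom{n}{\lfloor n/2\rfloor} \le 2^n/6$ for all $n\ge 24$. The plan is to reduce everything to this one coefficient and then control how it compares to $2^n$.

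The cleanest self-contained route is to track the normalized quantity $c_n := \binom{n}{\lfloor n/2\rfloor}/2^n$ and show it is non-increasing in $n$, so that it is enough to check the single value $n=24$. I would compute two consecutive ratios. For the step from an even index to the next odd one, using $\binom{2m+1}{m} = \frac{2m+1}{m+1}\binom{2m}{m}$ gives $c_{2m+1}/c_{2m} = \frac{2m+1}{2m+2} < 1$. For the step from an odd index to the next even one, the Pascal identity $\binom{2m+2}{m+1} = 2\binom{2m+1}{m}$ yields $c_{2m+2} = c_{2m+1}$. Together these show $c_n$ is non-increasing, so $c_n \le c_{24}$ for all $n\ge 24$. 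Finally I would evaluate $c_{24} = \binom{24}{12}/2^{24} = 2704156/16777216$ and compare it with $2^{24}/6 = 2796202.\overline{6}$; since $2704156 < 2796203$, we get $c_{24} < 1/6$, which closes the induction. (Alternatively, one could simply invoke the standard central binomial estimate $\binom{2m}{m} \le 4^m/\sqrt{3m+1}$: plugging in, the condition $\binom{2m}{m}/4^m \le 1/6$ becomes $3m+1 \ge 36$, i.e. $m\ge 12$, i.e. $n\ge 24$, and the odd case is handled identically via $\binom{2m+1}{m} = \tfrac12\binom{2m+2}{m+1}$.)

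There is no deep obstacle here; the only genuine subtlety is that the threshold $n\ge 24$ is essentially \emph{tight}, so the argument must be quantitatively sharp rather than merely asymptotic. Indeed $c_{22} = \binom{22}{11}/2^{22} = 705432/4194304 > 1/6$, so the constant $1/6$ genuinely fails for $n=22$ (and, by the equality $c_{2m+2}=c_{2m+1}$, for $n=21$ as well). This is why a crude bound such as $\binom{n}{k}\le 2^n$, or even a loose central-coefficient estimate, would not suffice: one needs either the exact value at $n=24$ together with the monotonicity of $c_n$, or a central binomial inequality whose constant is strong enough to reproduce the threshold exactly. Everything beyond this is routine arithmetic.
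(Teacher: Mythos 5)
Your proof is correct, but it takes a different route from the paper's. The paper verifies the same base case, $\binom{24}{12}\le 2^{24}/6$, and then runs a one-line induction on $n$ directly via Pascal's identity: for $k\ge 1$, $\binom{n}{k}=\binom{n-1}{k}+\binom{n-1}{k-1}\le 2\cdot 2^{n-1}/6=2^n/6$, so the uniform bound for row $n-1$ propagates to all of row $n$ at once, with no appeal to unimodality anywhere in the inductive step. You instead reduce to the central coefficient via unimodality and show that $c_n=\binom{n}{\lfloor n/2\rfloor}/2^n$ is non-increasing, using the exact ratios $c_{2m+1}/c_{2m}=\frac{2m+1}{2m+2}$ and $c_{2m+2}=c_{2m+1}$; all of these computations check out, as does the numerical comparison $2704156<2^{24}/6$. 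The paper's argument is more economical, while yours buys genuinely more: the monotonicity of $c_n$ (hence an asymptotically improving bound, since $c_n\to 0$), a second self-contained route via the standard estimate $\binom{2m}{m}\le 4^m/\sqrt{3m+1}$, and the sharpness analysis showing failure at $n=21,22$. One tiny caveat on that last point: your own identity $c_{23}=c_{24}$ shows the inequality already holds at $n=23$, so the threshold $n\ge 24$ in the statement is convenient rather than exactly optimal --- your hedge ``essentially tight'' is the right phrasing, and nothing in your argument is affected.
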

\begin{proof}
  For $n=24$ the largest binomial coefficient is $\binom{24}{12}$ and one can check that
  $\binom{24}{12} \leq 2^{24}/6$. Suppose that $n>24$ and the statement holds for $n-1$.
  We may assume that $k \geq 1$ (since the inequality clearly holds for $k=0$). 
  Then $\binom{n}{k}=\binom{n-1}{k}+\binom{n-1}{k-1} \leq 2 \cdot 2^{n-1}/6=2^{n}/6$.
\end{proof}

A  key fact in the argument is the following. 

\begin{lemma}\label{lem:divide}
  Let $Z=X \cup Y$ be  a set of points in general position,
  $X \cap Y =\emptyset$, 
  $|X|=n_1$, $|Y|=n_2$, $|Z|=n_1+n_2=n$.
  where $X \cap Y =\emptyset$. 
  Then $g(Z) \leq \binom{n}{n_1} g(X) g(Y)$. 
\end{lemma}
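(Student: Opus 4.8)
The plan is to encode each peeling sequence of $Z$ injectively by a triple consisting of a peeling sequence of $X$, a peeling sequence of $Y$, and an interleaving pattern that records, at each of the $n$ removal steps, whether the point removed came from $X$ or from $Y$. The bound then follows immediately by counting the possible triples.

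First I would make precise the decomposition already observed at the start of Section~\ref{subsec:first}. Given any peeling sequence $\pi$ of $Z$, let $\pi_X$ be the subsequence of $\pi$ consisting of its $X$-elements, and define $\pi_Y$ analogously. The key fact is that $\pi_X$ is itself a valid peeling sequence of $X$ (and likewise $\pi_Y$ of $Y$). This holds because whenever an $X$-point $p$ is removed from the current set $Z' \subseteq Z$, it is an extreme vertex of $\conv(Z')$; since $X \cap Z' \subseteq Z'$ gives $\conv(X \cap Z') \subseteq \conv(Z')$, the point $p$ is also an extreme vertex of $X \cap Z'$, so removing the $X$-points in the order they appear in $\pi$ is a legal peeling of $X$.

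Next I would define the encoding map $\pi \mapsto (\pi_X, \pi_Y, w)$, where $w$ is the word of length $n$ over $\{x,y\}$ whose $i$-th letter records whether the $i$-th point removed by $\pi$ lies in $X$ or in $Y$. The word $w$ has exactly $n_1$ letters equal to $x$, so there are precisely $\binom{n}{n_1}$ possible words. I claim this map is injective: from a triple $(\sigma, \tau, w)$ one reconstructs the corresponding $\pi$ uniquely by reading $w$ from left to right and, at each step, appending the next unused point of $\sigma$ when the letter is $x$ and the next unused point of $\tau$ when the letter is $y$. Hence distinct peeling sequences of $Z$ yield distinct triples.

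Finally, since the map is injective, $g(Z)$ is at most the total number of triples in the target set. Each such triple consists of one of the $g(X)$ peeling sequences of $X$, one of the $g(Y)$ peeling sequences of $Y$, and one of the $\binom{n}{n_1}$ interleaving words, which gives $g(Z) \le \binom{n}{n_1} g(X) g(Y)$ (we need only an upper bound, so it is harmless that not every triple arises from an actual peeling sequence of $Z$). I do not expect any genuine obstacle here; the one point requiring care is the verification that the induced subsequences are themselves valid peeling sequences, and this follows directly from the monotonicity of convex hulls under passing to subsets.
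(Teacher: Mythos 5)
Your proof is correct and is essentially the paper's own argument: your interleaving word $w$ is exactly what the paper calls the simplified peeling sequence $\pi^*$, and your injective encoding $\pi \mapsto (\pi_X,\pi_Y,w)$ is the same counting as the paper's observation that at most $g(X)g(Y)$ peeling sequences of $Z$ share a given simplified sequence. Your explicit verification that $\pi_X$ is a valid peeling sequence of $X$ (extremality is inherited by subsets containing the point) is a welcome detail the paper states without proof.
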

\begin{proof}
Let $\pi$ be a peeling sequence of $Z$ and let $\pi^*$ be its
simplified peeling sequence, that is, in $\pi$
we replace every element of $x$ (resp. $Y$) by
$x$ (resp. $y$).
Clearly, there are $\binom{n}{n_1}$ simplified peeling sequences.

Let  $\pi_X$ be the subsequence of $\pi$, consisting of the elements of $X$.  
Observe that $\pi_X$ is a peeling sequence of $X$.
Define  $\pi_Y$ analogously, and clearly it is a peeling sequence of $Y$.
Therefore, at most $g(X) g(Y)$ peeling sequences can have the same
simplified peeling sequence, consequently,
$g(Z) \leq \binom{n}{n_1} g(X)g(Y)$.
\end{proof}

\paragraph{Proof of Theorem \ref{thm:main}.}
Write $a=12.29$. For every $n$ we construct the sets $S_n$ recursively, such that $S_n$ contains $n$ points and 
$g(S_n) \le a^n/100$ for $n \geq 3$. Moreover, $S_n$ is {\em flat}, just like in the previous construction.
The set $S_1$ is just one point; $S_2$ is a point pair; and $S_3$ is a flat (obtuse) triangle. 

Suppose that $n \geq 4$ and for every $i<n$ we have already constructed $S_i$ satisfying the requirements.
In order to do a better recursion, we choose a specific variant in the previous construction.
Let $n=n_1+n_2+n_3$, where $n_i=\lfloor n/3\rfloor$ or $\lceil n/3\rceil$.
Take three rays $r_1$, $r_2$, $r_3$ with a common origin $O$ and
with angles $120^\circ$ between them, such that $r_1$ is horizontal.
Place a  copy of  $S_{n_1}$ on $r_1$ and call it $B_1$, 
a  copy of  $S_{n_2}$ on $r_2$, close to $O$, and call it $B_2$, 
and a  copy of  $S_{n_3}$ on $r_3$, far from $O$, and call it $B_3$.
Finally, let $S_n$ be a flattened copy of this set of $n$ points.
For simplicity we still denote  the three corresponding components of
$S_n$ by $B_1$, $B_2$, $B_3$, respectively.
Observe, that the projections
of $B_1$, $B_2$, and $B_3$ are separated on the $x$-axis, they come in this order,
and in $S_n$ all points are very close to their projections.
We have thereby defined the set $S_n$ for every $n$ and clearly it has $n$ points. 

\smallskip
We prove that $g(S_n) \le a^n/100$ by induction on $n$. The induction basis is $3 \leq n \leq 35$:
Obviously $g(P) \leq |P|!$ for every point-set $P$. By the special structure of $S_n$, we have
$g(S_n) \leq 3^{\lfloor n/3 \rfloor} (\lceil 2n/3 \rceil)! \leq a^n/100$ for every $3 \leq n \leq 35$.
Suppose now that $n \geq 36$ and that $g(S_i) \leq a^i/100$ for every $i$, where $3 \leq i \leq n-1$.
Let $n=n_1+n_2+n_3$, where $n_i=\lfloor n/3\rfloor$ or $\lceil n/3\rceil$.
Let $B_1$, $B_2$, $B_3$, be its three blocks, $B_i$ is an affine copy of $S_{n_i}$.
For any peeling sequence $\pi$ of $S_n$, we define its  simplified peeling sequence $\pi^*$
so that we replace every element of $B_{i}$ by $i$. 

Now let  $\pi$ be a peeling sequence of $S_n$ and let $\pi^*$ be its  simplified peeling sequence.
Just like in the proof of Theorem \ref{thm:simple},
take the last $1$, last $2$, and last $3$ in $\pi^*$,
and assume that the first of them is at position $s$.
Since one of the blocks is peeled off in step $s$, we have $\lfloor n/3\rfloor \leq s \leq n$.

Observe that until peeling step $s+1$, all three rays were active, so 
there was exactly one element of $B_1$, $B_2$, and $B_3$ on the convex hull.
Therefore, the first $s$ elements of $\pi$ are determined by $\pi^*$.
We distinguish four cases based on the value of $s$.
We estimate now the number of corresponding simplified peeling sequences, 
and the number of ways they can be expanded to a peeling sequence of $S_n$.
We denote the resulting upper bound estimates by $\sigma_1,\sigma_2,\sigma_3,\sigma_4$, and then show
that $\sum_{j=1}^4 \sigma_j \leq a^n/100$.
More precisely, we will prove that
\[ \sigma_1 \leq \frac12 \cdot \frac{a^n}{100}, \ \ 
\sigma_2 \leq \frac16 \cdot \frac{a^n}{100}, \ \ 
\sigma_3 \leq \frac16 \cdot \frac{a^n}{100}, \ \ 
\sigma_4 \leq \frac16 \cdot \frac{a^n}{100}.
\]

 \medskip
 \emph{Case 1.}
 $\lfloor n/3\rfloor \leq s \leq \lceil 5n/9 \rceil$. 
 Let $\sigma_1$ denote the number of corresponding peeling sequences of $S_n$.
 Let $\T$ be the corresponding set of simplified peeling sequences.
 By~\eqref{eq:entropy}, \eqref{eq:fandc}, and~\eqref{eq:coef},  we have
 \begin{align*}
 |\T| &\leq 3{\lceil 5n/9 \rceil \choose \lfloor n/3 \rfloor} {\lceil 2n/3 \rceil \choose \lfloor n/3\rfloor}
 = 3{\lceil 5n/9 \rceil \choose \lceil 5n/9 \rceil - \lfloor n/3 \rfloor} {\lceil 2n/3 \rceil \choose \lfloor n/3\rfloor}\\
 &\leq 3 \cdot 2 \cdot {\lceil 5n/9 \rceil \choose \lfloor 2n/9 \rfloor} 2^{\lceil 2n/3\rceil}/6 
 \leq 2^{\lceil 5n/9 \rceil \cdot H(0.4)}  2^{\lceil 2n/3\rceil} \\
&\leq 2^{(5 H(0.4) + 6) n/9}  \cdot 2^{8/9}  \cdot 2^{2/3}
 \leq 2^{10.855n/9} \cdot 2^{8/9}  \cdot 2^{2/3} \\
 &= 2^{14/9} \cdot 2^{10.855n/9}. 
\end{align*}

Let $\pi^*\in\T$.
Suppose for simplicity, that at position $s$ there is a $1$. So block $B_1$ is finished
first, and its peeling order is determined. For the other two blocks we have at most
$g(S_{n_2})g(S_{n_3})$ possibilities. We have $n_1, n_2, n_3\le \lceil n/3\rceil$, therefore,
by the induction hypothesis and Lemma \ref{lem:divide} we have
 \begin{align*}
   \sigma_1 &\leq |\T| \cdot g^2(S_{\lceil n/3\rceil}) \leq 2^{14/9} \cdot 2^{10.855n/9} \cdot
   \frac{a^{n -\lfloor  n/3 \rfloor}}{10000}
   \leq \frac12 \cdot \frac{a^n}{100},
\end{align*}
 where the last inequality follows from the two inequalities:
 \[ 2^{23/9} \cdot a^{2/3} \leq 100,  
\text{ and } 2^{10.855/3} \leq a. \]

\medskip
\emph{Case 2.}
$\lceil 5n/9\rceil \leq s \leq \lfloor 2n/3\rfloor$.
Let $\sigma_2$ denote the number of corresponding peeling sequences of
 $S_n$. Let $\T$ be the corresponding set of simplified
peeling sequences.
By Lemma~\ref{lem:coef} we have
\[ |\T| \leq 3{\lfloor 2n/3\rfloor \choose \lfloor n/3\rfloor }{\lceil 2n/3\rceil \choose \lceil n/3\rceil} \leq 2^{4n/3}/12. \]
Let $\pi^*\in\T$.
Suppose again there is an $1$ at position $s$, so block $B_1$ is finished first, and its peeling order is determined.
Moreover, since  $\lceil 5n/9\rceil \leq s$,
at least  $\lceil n/9\rceil$ points of $B_2$ or $B_3$ have already been removed, say, from $B_2$.
These removed points were the extreme points of $B_2$ in one direction, that is, a sub-block of $B_2$ has been removed.
The remainder of $B_2$ after step $s$ of the peeling process is a subset of (an affine image of) 
two sub-blocks of $S_{n_2}$, and by construction, both have size at most $\lceil n/9\rceil$.
By Lemma \ref{lem:divide} this set has at most $2^{2\lceil n/9\rceil}g(S_{\lceil n/9\rceil})^2$ peeling sequences.
For $B_3$ we have at most $g(S_{\lceil n/3\rceil})$ possibilities. Note that $n/9 \geq 4$ and so the induction hypothesis
applies. Therefore, 
\begin{align*}
\sigma_2 &\leq |\T| \cdot g(S_{\lceil n/3\rceil}) \cdot g^2(S_{\lceil n/9\rceil}) \cdot 2^{\lceil 2n/9\rceil} \\
&\leq \frac{2^{12n/9}}{12} \cdot \frac{a^{\lceil n/3 \rceil}}{100}  \cdot \frac{a^{\lceil n/9 \rceil}}{100}
\cdot \frac{a^{\lceil n/9 \rceil}}{100}
\cdot 2^{2n/9} \cdot 2^{8/9} \\
&\leq \frac16 \cdot 2^{14n/9} \cdot a^{5n/9} \cdot a^{22/9} \cdot \frac{1}{10^6} \leq \frac16  \cdot \frac{a^n}{100}.
\end{align*}
The last inequality follows from the two inequalities:
 \[ a^{22/9} \leq 10^4,  \text{ and } 2^{7/2} \leq a. \]

\medskip
\emph{Case 3.}
$\lceil 2n/3 \rceil \leq s \leq \lfloor 7n/9\rfloor$.
Let $\sigma_3$ denote the number of corresponding peeling sequences and 
$\T$ the corresponding set of simplified peeling sequences.
We have
\[ |\T| \leq 3{\lfloor 7n/9 \rfloor \choose \lceil n/3\rceil}{\lceil 2n/3\rceil \choose \lceil n/3 \rceil} \leq 2^{13n/9}/12. \]
Let $\pi^*\in\T$.
Suppose again that at position $s$ there is an $1$. So block $B_1$ is finished first, and its peeling order is determined.
Moreover, since $\lceil 2n/3\rceil \leq s$, at least $\lceil n/3 \rceil$ additional points have been removed,
that is, two sub-blocks of $B_2$ or $B_3$, of at least $\lfloor n/9 \rfloor$ points,
either two sub-blocks from one of them or one sub-block from each of them. 
We can argue similarly to the previous cases and get the following estimate.
\begin{align*}
\sigma_3 &\leq |\T| \left(g(S_{\lceil n/3\rceil}) \cdot g(S_{\lceil n/9\rceil}) +g^4(S_{\lceil n/9\rceil}) \cdot 2^{4\lceil n/9\rceil}\right) \\
&\leq \frac{2^{13n/9}}{12} \left( \frac{a^{\lceil n/3 \rceil} a^{\lceil n/9 \rceil}}{10^4} +
\frac{a^{4\lceil n/9 \rceil} 2^{4\lceil n/9 \rceil}}{10^8} \right) \\
&\leq  \frac{2^{13n/9}}{12} \left( \frac{a^{4n/9} \cdot a^{14/9}}{10^4} +   
\frac{(2a)^{4n/9} \cdot (2a)^{32/9}}{10^8} \right) \\
&\leq  \frac16  \cdot \frac{a^n}{100}.
\end{align*}
The last inequality follows from the three inequalities:
\[ a^{14/9} \leq 10^2/2, \ \ (2a)^{32/9} \leq 10^6/2, \ \ 
\text{ and } 2^{17/5} \leq a. \]

\medskip
\emph{Case 4.} $\lceil 7n/9 \rceil \leq s \leq n$.
Let $\sigma_4$ denote the number of corresponding peeling sequences and 
$\T$ the corresponding set of simplified peeling sequences. Obviously $|\T| \leq 3^n$.
Let $\pi^*\in\T$ and suppose again that block $B_1$ is finished first, so its peeling order is determined.
Since $\lceil 7n/9\rceil \leq s$, three sub-blocks of $B_2$ or $B_3$ have been removed. 
The following estimate is implied.
\begin{align*}
  \sigma_4 &\leq |\T| \cdot g^3(S_{\lceil n/9\rceil}) \cdot 2^{2\lceil n/9\rceil} \\
  &\leq 3^n \cdot 2^2 \cdot a^3 \cdot 2^{2n/9} \cdot a^{n/3}/100 \\
  &\leq \frac16  \cdot \frac{a^n}{100}.
\end{align*}
The last inequality is clearly satisfied. 

\smallskip
Finally, $g(S_n)=\sigma_1+\sigma_2+\sigma_3+\sigma_4 \leq a^n/100$, concluding the induction step
and thereby also the proof of Theorem~\ref{thm:main}.
\qed

\section{Higher dimensions} \label{sec:d-dim}

In this section we prove Theorem~\ref{thm:d-dim} and its corollary.

\paragraph{Lower bound.}
Let $P$ be any $n$-element point set in general position. By the assumption, $\conv(P)$ has
at least $d+1$ extreme vertices, removal of each yields a set of $n-1$ points in general position.
Any two peeling sequences resulting from the removal of two different extreme vertices are clearly different.
As such, $g(\cdot)$ satisfies the recurrence $g(n) \geq (d+1) \cdot g(n-1)$. Consequently,
$g(n) = \Omega((d+1)^n)$.

\paragraph{Upper bound.}
We proceed similarly to the planar case.
We construct the sets $S_k$ recursively for $k \in \NN$,  
such that the set $S_k$ contains $n=(d+1)^k$ points and $g(S_k)\le (d+1)^{(d+1)n}$.
Moreover, $S_k$ is {\em thin}, that is, all points are very close
(compared to the minimum distance in $S_k$) to a line.

The set $S_0$ is just one point. Suppose that we already have a construction $S_{k-1}$
with $n/(d+1)=(d+1)^{k-1}$ points. Take a regular simplex $\Delta \subset \RR^d$ centered at the 
origin and such that $(1,0,\ldots,0)$ is a vertex of $\Delta$.
Take the $(d+1)$ rays $r_1,\ldots,r_{d+1}$ from the origin to its vertices.
Place a  copy of  $S_{k-1}$ along each ray.
Let $B_1, \ldots, B_{d+1}$ denote the copies of $S_{k-1}$, and let 
$S'_k=\cup_{i=1}^{d+1}B_i$, a set of $n=(d+1)^k$ points.

We can assume that no two points of $S'_{k}$ have the same $x_1$-coordinate, otherwise we apply a rotation.
Let $S=S_{k}$ be a flattened copy of $S'_{k}$, that is, apply the transformation
$(x_1, x_2, \ldots, x_d)\longrightarrow (x_1, \varepsilon x_2, \ldots, \varepsilon x_d)$
to $S'_{k}$.

Let $\pi$ be any peeling sequence of $S$.
Observe that each ray ``supports''  $n/(d+1)$ points and the following invariant is maintained
%
\begin{enumerate}[(I)$_d$] \itemsep 1pt
\item \label{inv:d-dim} Let $\pi'$ be a \emph{prefix} of $\pi$, and assume that
  $S \stackrel{\pi'}{\Rightarrow} S'$, where $S'$ still has $d+1$ active rays,
  \ie, none of its blocks have been completely erased.
  Then $S'$ has exactly $d+1$ extreme vertices, one from each block.
\end{enumerate}

We obtain the simplified peeling sequence as before, replace every element of
$B_i$ by $i$. To obtain all peeling sequences $\pi$ of $S$, first we determine the 
simplified peeling sequence $\pi^*$ and then we expand it to a peeling sequence.
There are less than $(d+1)^n$ simplified peeling sequences. Let $\pi^*$ be a simplified peeling sequence.
Consider the last $i$ for every $1\le i\le d+1$ 
and take the first of them.
Assume for simplicity that it is $1$, and it is at position $s$. Clearly, $n/d \le s \le n$.

Since $B_1$ was finished first, the peeling order of its points is determined. 
For the elements of $B_i$, $2\le i\le d+1$ we have at most
$g(B_i)=g(S_{k-1}) \leq (d+1)^{n}$ choices. Consequently, the induction step yields
\[ g(S_k) \leq (d+1)^n g^d(S_{k-1}) \leq (d+1)^{(d+1)n}, \]
as required.
\qed

\paragraph{Proof of Corollary~\ref{cor:d-dim}.}
For $(d+1)^k<n<(d+1)^{k+1}$, let $P_n$ be any subset of $S_{k+1}$ of size $n$.
Then by monotonicity, we have
\[ g(n) \le g(P_n) \le g(P_{(d+1)^{k+1}}) \le (d+1)^{(d+1)^2n}, \]
as claimed.
\qed

\section{Concluding remarks} \label{sec:remarks}

Observe that the convex layer decomposition of a point set $P$ --- mentioned in Section~\ref{sec:intro} --- yields
a set of peeling sequences naturally derived from it: remove the points from one layer, one by one, before moving
to the next layer. Indeed, this is so, since any point of the layer under removal is still extreme at that step.   
In general, if there are $m$ layers and their sizes are $h_1,h_2,\ldots,h_m$ counting from outside,
where $h_1,\ldots,h_{m-1} \geq 3$, $h_m \geq 1$, and $\sum_{i=1}^m h_i =n$, 
then there are $h_1! h_2! \cdots h_m!$  peeling sequences given by the convex layer decomposition.

Apart from the case of points in convex position, the set of peeling sequences corresponding to
layer by layer removal of the points is a strict subset of the set of peeling sequences of $P$.
It is worth noting that this subset can be much smaller than the whole set. For example,
consider a point set with $n/3$ layers, where each layer is a triangle (and so the point set
is the vertex set of $n/3$ nested triangles). Then there are $6^{n/3} = 1.817\ldots^n$ 
peeling sequences given by the convex layer decomposition, whereas the total number of
peeling sequences is $\Omega(3^n)$. 

Our estimates on the growth rate of $g(n)$ are now closer, but a substantial gap remains.
A natural question is whether the trivial bound of $\Omega(3^n)$ can be improved.

\begin{problem} \label{problem:4^n}
  Is there a constant $\delta >0$ such that $g(n)=\Omega((3+\delta)^n)$? 
\end{problem}

\end{document}